\newcommand{\stkout}[1]{\ifmmode\text{\sout{\ensuremath{#1}}}\else\sout{#1}\fi}
\newtheorem{theorem}{Theorem}[section]
\newtheorem{proposition}[theorem]{Proposition}
\theoremstyle{definition}
\newtheorem{definition}[theorem]{Definition}
\newtheorem{remark}[theorem]{Remark}
\newtheorem{example}[theorem]{Example}
\theoremstyle{remark}
\numberwithin{equation}{section}
\begin{document}

\title{On the annihilator condition for skew PBW extensions}
\author{Karol Herrera}
\address{Universidad ECCI}
\curraddr{Campus Universitario}
\email{kherrerac@ecci.edu.co}
\thanks{}


\author{Sebastián Higuera}
\address{Universidad ECCI}
\curraddr{Campus Universitario}
\email{shiguerar@ecci.edu.co}
\thanks{}


\author{Andr\'es Rubiano}
\address{Universidad ECCI}
\curraddr{Campus Universitario}
\email{arubianos@ecci.edu.co}
\thanks{}

\dedicatory{Dedicated to our dear friend  Armando Reyes}

\begin{abstract} 
We investigate the annihilator condition $(a.c.)$ for skew Poincaré-Birkhoff-Witt extensions. We prove that some results about the annihilator condition $(a.c.)$ for skew polynomial rings also hold for skew PBW extensions. We also demonstrate that the behavior of annihilators is determined by the defining relations of the extension. Our results extend those corresponding presented for skew polynomial rings
\end{abstract}

\subjclass[2020]{16D25, 16S30, 16S32, 16S36, 16S38}
\keywords{Annihilator condition, compatible  ring, Armendariz ring, p.q.-Baer ring, skew polynomial ring, skew PBW extension.}

\maketitle	



\section{Introduction}

Throughout the paper, every ring is associative (not necessarily commutative) with identity unless otherwise stated. The center and the set of all idempotent elements of $R$ is denoted by \( C(R) \) and \( I(R) \), respectively. In addition, $P(R)$, $\mathrm{nil}^{*}(R)$ and $\mathrm{nil}(R)$ denote the prime radical of $R$, the upper nilradical of $R$ (that is, the sum of all nil ideals of $R$) and the set of nilpotent elements of $R$, respectively. If $P(R) = \mathrm{nil}(R)$ then $R$ is called \emph{2-primal}, and if $\mathrm{nil}^{*}(R) = \mathrm{nil}(R)$ then $R$ is an \emph{NI} ring. An idempotent element $e \in R$ is called \textit{left semi-central} (resp., \textit{right semi-central}) if $re = ere$ (resp., $er = ere$) for all $r \in R$. The sets of left and right semi-central idempotents of \( R \) will be written as \( S_l(R) \) and \( S_r(R) \). If $S\subseteq R$, we define the \textit{right annihilator} and the \textit{left annihilator} as $r_R(S) = \{a \in R \mid Sa = 0\}$ and $\ell_R(S) = \{a \in R \mid aS = 0\}$, respectively.

Jerison~\cite{HenriksenJerison} introduced the concept of the \textit{annihilator condition} \((a.c.)\) for commutative reduced rings (\(R\) is called \textit{reduced} if it has no non-zero nilpotent elements). In the commutative context, \( R \) satisfies the annihilator condition \((a.c.)\) (or has Property \((a.c.)\)) if the annihilator of each finitely generated ideal \( I \) of \( R \) coincides with the annihilator of a single element of \( R \). Later, Lucas~\cite{Lucas} generalized this definition to all commutative rings. The class of rings satisfying Property \((a.c.)\) is quite extensive. Some examples include subdirect sums of totally ordered integral domains, Bézout rings, direct sums of rings with Property \((a.c.)\), reduced Noetherian rings, as well as certain nontrivial reduced graded rings (see~\cite{HenriksenJerison,Lucas}).

Hong et al.~\cite{Hong2009} extended this notion to the noncommutative case. A ring \( R \) is said to satisfy \textit{Property \((a.c.)\) on the right} (respectively, \textit{on the left}) if for every finitely generated right ideal \( I \) of \( R \), there exists some \( c \in R \) such that $r_R(I) = r_R(cR)$ (respectively, $\ell_R(I) = \ell_R(Rc)$). They investigated many classes of rings that enjoy Property \((a.c.)\), proving that this includes subdirect products of fully ordered semiprime rings, semiprime rings with many minimal prime ideals, semiprime rings with the ascending chain condition on annihilators, and others. They showed that matrix rings over rings with Property \((a.c.)\) also satisfy the same property \cite[Theorem 2.1]{Hong2009}, and that the skew polynomial rings \( R[x;\sigma,\delta] \) introduced by Ore \cite{Ore1933}, with \( \sigma \) an automorphism of $R$ and \( \delta \) a $\sigma$-derivation of $R$ has Property \((a.c.)\) \cite[Theorem 1.10]{Hong2009}. 

Following Rege and Chhawchharia~\cite{Rege1997}, a ring \(R\) is \emph{Armendariz} if for any two polynomials in \(R[x]\), 
whenever their product is zero, then the product of each pair of their coefficients is also zero. Hirano~\cite{Hirano} introduced the \emph{quasi-Armendariz rings}, 
which generalize Armendariz rings. This class of rings satisfies the following condition: 
for any two polynomials \(p(x)=\sum_{i=0}^{n}p_{i}x^{i}\), \(q(x)=\sum_{j=0}^{m}q_{j}x^{j}\in R[x]\), 
if \(p(x)R[x]q(x)=0\), then \(p_{i}Rq_{j}=0\) for all \(i,j\).  It is well-known that reduced rings are Armendariz, and Armendariz rings are quasi-Armendariz rings, but the converse are not true in general \cite[Proposition 2.1]{Rege1997}. Hong et al.~\cite{Hong2010} extended this definition to the case of skew polynomial rings as follows: a ring \(R\) is called \textit{\(\sigma\)-skew quasi-Armendariz} if for \(p(x)=\sum_{i=0}^{n}p_{i}x^{i}\), \(q(x)=\sum_{j=0}^{m}q_{j}x^{j}\in R[x,\sigma]\), whenever \(p(x)R[x,\sigma]q(x)=0\), it follows that \(p_{i}R\sigma^{i}(q_{j})=0\) for all \(i,j\). The Armendariz rings over skew polynomial rings has been widely studied by several authors (see \cite{AndersonCamillo1998,Armendariz1974,HongKimKwak2003}).

If $\sigma$ is an endomorphism of $R$ and we have that $r\sigma(r)=0$ implies that $r=0$ for all $r\in R$, then $R$ is {\em rigid}, and if there exists a rigid endomorphism $\sigma$ of $R$, then $R$ is $\sigma$-{\em rigid}. Annin in his Ph.D. Thesis \cite{Annin2002PhD} (see \cite{Annin2004, HashemiMoussavi2005}) or Hashemi and Moussavi \cite{HashemiMoussavi2005} introduced compatible rings as a generalization of $\sigma$-rigid rings and reduced rings. $R$ is {\it $\sigma$-compatible} if for each $a, b \in R$, then $ab = 0$ if and only if $a\sigma(b) = 0$; $R$ is {\it $\delta$-compatible} if for each $a, b \in R$, we obtain that $ab = 0$ implies that $a\delta(b) = 0$, and if $R$ is both $\sigma$-compatible and $\delta$-compatible, then $R$ is called {\it $(\sigma,\delta)$-compatible}. 

Malki and Louzari \cite{MalkiLouzari2024} investigated the relationship between Armendariz properties and the annihilator condition $(a.c.)$ on skew polynomial rings $R[x;\sigma]$. They proved that if $R$ is $\sigma$-skew quasi-Armendariz rings always satisfy Property $(a.c.)$ on the left, but not necessarily on the right \cite[Proposition 3.2 and Example 3.4]{MalkiLouzari2024}. The investigation established sufficient conditions for these rings to also possess Property $(a.c.)$, particularly when $R$ is $\sigma$-compatible or a semiprime ring with $\sigma$ surjective \cite[Corollary 3.9]{MalkiLouzari2024}. They studied how Property \((a.c.)\) passes from \(R\) to its skew polynomial ring \(R[x;\sigma]\) \cite[Proposition 3.10]{MalkiLouzari2024}, giving new insight into the behavior of this property on skew polynomial rings.

Thinking about the above results on the annihilator condition and Armendariz properties for skew polynomial rings, a natural task is how these ideas can be extended to more general noncommutative contexts. Our purpose in this paper is to study the connection between the annihilator condition and Armendariz properties on {\em skew Poincaré–Birkhoff–Witt extensions} ({\em SPBW extensions}, for short), which are a large class of noncommutative polynomial rings that include many important examples. The SPBW extension were defined y Gallego and Lezama \cite{LezamaGallego2011} as a generalization of the PBW extensions considered by Bell and Goodearl \cite{BellGoodearl1988} and skew polynomial rings of injective type introduced by Ore \cite{Ore1933}. Several authors have shown that SPBW extensions also generalize families of noncommutative algebras, such as 3-dimensional skew polynomial algebras introduced by Bell and Smith \cite{BellSmith1990}, diffusion algebras considered by Isaev {et al.} \cite{IsaevPyatovRittenberg2001}, ambiskew polynomial rings introduced by Jordan (see \cite{Jordan1993} and references therein), almost normalizing extensions defined by McConnell and Robson \cite{McConnellRobson2001} and skew bi-quadratic algebras recently introduced by Bavula \cite{Bavula2023}. For more details on the SPBW extensions, see \cite{Fajardoetal2020, HigueraRamirezReyes2024, HigueraReyes2022}.

The paper is organized as follows. In Section \ref{Definitions}, we recall fundamental definitions and establish key properties of SPBW extensions and $(\Sigma, \Delta)$-compatible rings.
Section \ref{MainResults} contains our main theorems: Theorems \ref{BaerSA1Theorem}, \ref{AbelianNIppTheorem}, \ref{BijectiveSPBWppTheorem}, 
and \ref{pqBaerTheorem}, which establish conditions 
under which skew PBW extensions satisfy Property $(a.c.)$ on the right (resp., left), 
generalizing previous work on skew polynomial rings $R[x;\sigma]$ by Malki and Louzari \cite{MalkiLouzari2024}. 
Section \ref{Examplespaper} illustrates our findings with examples of noncommutative algebras that are not skew polynomial rings.


Throughout the paper, $\mathbb{N}$, $\mathbb{Z}$, $\mathbb{R}$, and $\mathbb{C}$ denote the standard numerical systems, with $\mathbb{N}$ including the zero element. The symbol $\Bbbk$ denotes a field, and $\Bbbk^* := \Bbbk \setminus \{0\}$.

\section{Definitions and elementary properties}\label{Definitions}

\begin{definition}[{\cite[Definition 1]{LezamaGallego2011}}] \label{gpbwextension}
A ring $A$ is called a {\em skew PBW} ({\em SPBW}) {\em extension} {\em of} $R$, which is denoted by $A:=\sigma(R)\langle
x_1,\dots,x_n\rangle$, if the following conditions hold:
\begin{enumerate}
\item[\rm (i)]$R$ is a subring of $A$ sharing the same identity element.

\item[\rm (ii)] There exist finitely many elements $x_1,\dots ,x_n\in A$ such that $A$ is a left free $R$-module, with basis the set of standard monomials
\begin{center}
${\rm Mon}(A):= \{x^{\alpha}:=x_1^{\alpha_1}\cdots
x_n^{\alpha_n}\mid \alpha=(\alpha_1,\dots ,\alpha_n)\in
\mathbb{N}^n\}$.
\end{center}

We consider $x^0_1\cdots x^0_n := 1 \in {\rm Mon}(A)$.
\item[\rm (iii)] For every $1\leq i\leq n$ and any non-zero element $r\in R$, there exists a non-zero element $c_{i,r}\in R$ such that $x_ir-c_{i,r}x_i\in R$.
\item[\rm (iv)] For $1\leq i,j\leq n$, there exists a non-zero element $d_{i,j}\in R$ such that
\[
x_jx_i-d_{i,j}x_ix_j\in R+Rx_1+\cdots +Rx_n,
\]

i.e. there exist elements $r_0^{(i,j)}, r_1^{(i,j)}, \dotsc, r_n^{(i,j)} \in R$ with
\begin{center}
$x_jx_i - d_{i,j}x_ix_j = r_0^{(i,j)} + \sum_{k=1}^{n} r_k^{(i,j)}x_k$.    
\end{center}
\end{enumerate}
\end{definition}

Since ${\rm Mon}(A)$ is a left $R$-basis of $A$, this implies that the elements $c_{i,r}$ and $d_{i, j}$ are unique, and every non-zero element $f \in A$ can be uniquely expressed as $f = a_0 + a_1X_1 + \cdots + a_mX_m$ with $a_i \in R$, $X_0=1$, and $X_i \in \text{Mon}(A)$, for $0 \leq i \leq m$ \cite[Remark 2]{LezamaGallego2011}. 

If $A=\sigma(R)\langle x_1,\dots,x_n\rangle$ is an SPBW extension of $R$, there exist an injective endomorphism $\sigma_i$ of $R$ and a $\sigma_i$-derivation $\delta_i$ of $R$ such that $x_ir=\sigma_i(r)x_i+\delta_i(r)$, for all $1\leq i\leq n$ and $r\in R$ \cite[Proposition 3]{LezamaGallego2011}. The notation $\Sigma:=\{\sigma_1,\dots,\sigma_n\}$ and $\Delta:=\{\delta_1,\dots,\delta_n\}$ denotes the corresponding sets of injective endomorphisms and $\sigma_i$-derivations, respectively.

\begin{definition}\label{quasicommutative}
If $A=\sigma(R)\langle x_1,\dots,x_n\rangle$ is an SPBW extension of $R$, then:
\begin{itemize}
    \item[{\rm (i)}] {\cite[Definition 4]{LezamaGallego2011}} $A$ is {\it quasi-commutative} if the conditions ${\rm (iii)}$ and ${\rm (iv)}$ formulated in Definition \ref{gpbwextension} are replaced by the following: 
\begin{enumerate}
    \item[(iii')] For every $1 \leq i \leq n$ and non-zero element $r \in R$ there exists a non-zero element $c_{i,r} \in R$ such that $x_ir = c_{i,r}x_i$.
    
\item[(iv')] For every $1 \leq i, j \leq n$ there exists a non-zero element $d_{i,j} \in R$ such that $x_jx_i = d_{i,j}x_ix_j$.    
\end{enumerate}

    \item[{\rm (ii)}] \cite[Definition 4]{LezamaGallego2011} $A$ is {\it bijective} if  $\sigma_i$ is bijective for all $1 \leq i \leq n$, and $d_{i,j}$ is invertible for every $1 \leq i <j \leq n$.
    \item [\rm (iii)] \cite[Definition 2.3]{AcostaLezamaReyes2015} If $\sigma_i={\rm id}_R$ is the identity map of $R$ for all $1\le i \le n$, then $A$ is called of \textit{derivation type}. If $\delta_i=0$ for all $1\le i \le n$, then $A$ is of \textit{endomorphism type}.
\end{itemize}
\end{definition}

\begin{remark}[{\cite[Section 3]{LezamaGallego2011}}]\label{definitioncoefficients}
If $A=\sigma(R)\langle x_1,\dots,x_n\rangle$ is an SPBW extension of $R$, then: 
\begin{enumerate}
\item[\rm (1)] For any non-zero element $\alpha=(\alpha_1,\dots,\alpha_n)\in \mathbb{N}^n$, we will write 
$\sigma^{\alpha}:=\sigma_1^{\alpha_1}\circ \dotsb \circ \sigma_n^{\alpha_n}$, $\delta^{\alpha} = \delta_1^{\alpha_1} \circ \dotsb \circ \delta_n^{\alpha_n}$, where $\circ$ represents the usual function composition and 
$|\alpha|:=\alpha_1+\cdots+\alpha_n$. If
$\beta=(\beta_1,\dots,\beta_n)\in \mathbb{N}^n$, then
$\alpha+\beta:=(\alpha_1+\beta_1,\dots,\alpha_n+\beta_n)$.

\item[\rm (2)] Let $\succeq$ be a total order defined on ${\rm Mon}(A)$. If $x^{\alpha}\succeq x^{\beta}$ but $x^{\alpha}\neq x^{\beta}$, we write $x^{\alpha}\succ x^{\beta}$. If $f$ is a non-zero element of $A$, then we use expressions as $f=\sum_{i=0}^ma_iX_i$, with $a_i\in R$, and $X_m\succ \dotsb \succ X_1$. With this notation, we define ${\rm
lm}(f):=X_m$, the \textit{leading monomial} of $f$; ${\rm
lc}(f):=a_m$, the \textit{leading coefficient} of $f$; ${\rm
lt}(f):=a_mX_m$, the \textit{leading term} of $f$; ${\rm exp}(f):={\rm exp}(X_m)$, the \textit{order} of $f$. Notice that $\deg(f):={\rm max}\{\deg(X_i)\}_{i=1}^m$. Finally, if $f=0$, then
${\rm lm}(0):=0$, ${\rm lc}(0):=0$, ${\rm lt}(0):=0$. We also
consider $X\succ 0$ for any $X\in {\rm Mon}(A)$. Thus, we extend $\succeq$ to ${\rm Mon}(A)\cup \{0\}$.
\end{enumerate}
\end{remark}

\begin{proposition}[{\cite[Theorem 7]{LezamaGallego2011}}]\label{coefficientes}
If $A$ is a polynomial ring with respect to the set of indeterminates $\{x_1,\dots,x_n\}$ and $R$ is the coefficient ring, then $A$ is an SPBW  extension of $R$ if and only if the following conditions hold:
\begin{enumerate}
\item[\rm (1)] For each $x^{\alpha}\in {\rm Mon}(A)$ and every non-zero element $r\in R$, there exist unique elements $r_{\alpha}:=\sigma^{\alpha}(r)\in R\ \backslash\ \{0\}$ and $p_{\alpha ,r}\in A$ such that $x^{\alpha}r=r_{\alpha}x^{\alpha}+p_{\alpha, r}$,  where $p_{\alpha ,r}=0$, or $\deg(p_{\alpha ,r})<|\alpha|$ if
$p_{\alpha , r}\neq 0$. If $r$ is left invertible,  so is
$r_\alpha$.

\item[\rm (2)]For each $x^{\alpha},x^{\beta}\in {\rm Mon}(A)$,  there exist unique elements $c_{\alpha,\beta}\in R\ \backslash\ \{0\}$ and $p_{\alpha,\beta}\in A$ such that $x^{\alpha}x^{\beta}=c_{\alpha,\beta}x^{\alpha+\beta}+p_{\alpha,\beta}$, where $c_{\alpha,\beta}$ is left invertible, $p_{\alpha,\beta}=0$, or $\deg(p_{\alpha,\beta})<|\alpha+\beta|$ if
$p_{\alpha,\beta}\neq 0$.
\end{enumerate}
\end{proposition}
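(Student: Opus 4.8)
This is an equivalence, so the plan is to prove the two implications separately; the backward implication is short, while the forward implication (that an SPBW extension satisfies $(1)$ and $(2)$) carries the real content and is handled by induction on total degree.

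For the backward direction, suppose $A$ is a polynomial ring over $R$ in $x_1,\dots,x_n$ and that $(1)$ and $(2)$ hold. Conditions (i) and (ii) of Definition~\ref{gpbwextension} are built into the hypothesis that $A$ is such a polynomial ring (so $R$ is a subring sharing $1$ and $\mathrm{Mon}(A)$ is a left $R$-basis). To obtain (iii) I would specialize $(1)$ to the degree-one monomial $x^{\alpha}=x_i$: it yields $x_ir=\sigma_i(r)x_i+p_{e_i,r}$ with $p_{e_i,r}=0$ or $\deg(p_{e_i,r})<1$, hence $p_{e_i,r}\in R$, and setting $c_{i,r}:=\sigma_i(r)\in R\setminus\{0\}$ gives $x_ir-c_{i,r}x_i\in R$. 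To obtain (iv) I would specialize $(2)$ to $x^{\alpha}=x_j$, $x^{\beta}=x_i$: it yields $x_jx_i=c_{e_j,e_i}x^{e_i+e_j}+p_{e_j,e_i}$ with $\deg(p_{e_j,e_i})<2$, so $p_{e_j,e_i}\in R+Rx_1+\cdots+Rx_n$, and setting $d_{i,j}:=c_{e_j,e_i}$ recovers the defining relation exactly.

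For the forward direction, assume (i)--(iv). I would prove $(1)$ by induction on $|\alpha|$, taking as base case the relation $x_ir=\sigma_i(r)x_i+\delta_i(r)$ recorded after Definition~\ref{gpbwextension}; here $p_{e_i,r}=\delta_i(r)\in R$ and $\sigma_i(r)\neq 0$ by injectivity of $\sigma_i$. For the step, factor $x^{\alpha}=x_i\,x^{\alpha-e_i}$ for the least $i$ with $\alpha_i>0$, apply the hypothesis to $x^{\alpha-e_i}r$, and then move $x_i$ past the resulting coefficient via the base case; since $i$ is least, the fixed composition order gives $\sigma_i\circ\sigma^{\alpha-e_i}=\sigma^{\alpha}$, so the top term is $\sigma^{\alpha}(r)x^{\alpha}$, while the remaining contributions lie in degree $<|\alpha|$ by the basic degree estimate following from the defining relations (left multiplication by a generator raises degree by at most one). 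Nonvanishing of $\sigma^{\alpha}(r)$ follows from injectivity of the $\sigma_i$, and left-invertibility of $\sigma^{\alpha}(r)$ when $r$ is left invertible follows because $\sigma^{\alpha}$ is a ring endomorphism. Uniqueness of $(r_{\alpha},p_{\alpha,r})$ is forced by the left $R$-basis property: two representations would differ by $(r_{\alpha}-r_{\alpha}')x^{\alpha}=p_{\alpha,r}'-p_{\alpha,r}$, and comparing the coefficient of the basis monomial $x^{\alpha}$ (of degree $|\alpha|$, absent from the right side) gives equality.

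Finally I would establish $(2)$ by reordering the word $x_1^{\alpha_1}\cdots x_n^{\alpha_n}x_1^{\beta_1}\cdots x_n^{\beta_n}$ into the standard increasing form $x^{\alpha+\beta}$: each adjacent inversion $x_jx_i$ with $j>i$ is rewritten using (iv) as $d_{i,j}x_ix_j$ plus terms of degree $<2$, and part $(1)$ is then used to commute each freshly produced coefficient to the far left, introducing an injective $\sigma$-twist and strictly lower-degree corrections. Organizing this as an induction on $|\alpha|+|\beta|$ (and, inside each stage, on the number of inversions) collapses the top-degree part to a single multiple $c_{\alpha,\beta}x^{\alpha+\beta}$ and sends everything else to degree $<|\alpha+\beta|$, with uniqueness again supplied by the basis property. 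I expect the main obstacle to be the bookkeeping for the leading coefficient $c_{\alpha,\beta}$: it must be tracked explicitly as a product of $\sigma$-twisted structure constants $\sigma^{\gamma}(d_{i,j})$, so that its nonvanishing is guaranteed by injectivity of the $\sigma_i$ and its left-invertibility reduces to that of the $d_{i,j}$ via part $(1)$ (which shows $\sigma^{\gamma}$ preserves left inverses); ensuring that the final $c_{\alpha,\beta}$ is independent of the order in which inversions are resolved is precisely what the uniqueness clause guarantees, and keeping this consistent is the most delicate part of the argument.
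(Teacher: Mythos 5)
The paper does not actually prove this proposition; it imports it verbatim as \cite[Theorem 7]{LezamaGallego2011}, and your outline follows the standard induction used there. However, there is a genuine gap at the point you yourself identify as the delicate one. You reduce the left-invertibility of $c_{\alpha,\beta}$ to ``that of the $d_{i,j}$,'' but Definition~\ref{gpbwextension}(iv) only assumes the $d_{i,j}$ are \emph{non-zero}; their invertibility is exactly the extra hypothesis that defines a \emph{bijective} SPBW extension and is not available in the general case being characterized here. So your reduction does not terminate. The problem is compounded because your separate claim that $c_{\alpha,\beta}\neq 0$ ``by injectivity of the $\sigma_i$'' is also insufficient: $c_{\alpha,\beta}$ is a product of several factors $\sigma^{\gamma}(d_{i,j})$, and in a ring with zero divisors a product of non-zero elements can vanish. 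Both the non-vanishing and the left-invertibility of $c_{\alpha,\beta}$ therefore rest on an unproved premise.

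The missing ingredient is that condition (iv) is imposed for \emph{all} ordered pairs $(i,j)$, not only $i<j$. For $i<j$ write $x_jx_i=d_{i,j}x_ix_j+q$ and $x_ix_j=d_{j,i}x_jx_i+p$ with $p,q\in R+Rx_1+\cdots+Rx_n$; substituting gives $x_ix_j=d_{j,i}d_{i,j}\,x_ix_j+(d_{j,i}q+p)$, and since $x_ix_j=x^{e_i+e_j}$ is a basis monomial of degree $2$ while $d_{j,i}q+p$ has degree at most $1$, comparing coefficients in the left $R$-basis $\mathrm{Mon}(A)$ yields $d_{j,i}d_{i,j}=1$. Hence each $d_{i,j}$ with $i<j$ is left invertible, and your (correct) observations that $\sigma^{\gamma}$ preserves left inverses and that a product of left-invertible elements is left invertible (in particular non-zero) then close both claims about $c_{\alpha,\beta}$. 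A secondary structural point: the degree estimate $\deg(x_if)\le\deg(f)+1$ that you invoke in the inductive step of $(1)$ already requires rewriting products $x_ix^{\gamma}$ in the standard basis, i.e., instances of $(2)$ in lower degree, so $(1)$ and $(2)$ should be established by a single simultaneous induction on total degree rather than sequentially; as written, your proof of $(1)$ quietly presupposes part of $(2)$.
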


\begin{proposition}[{\cite[Proposition 2.9 and Remark 2.10 iv)]{Reyes2015Baer}}] \label{Reyes2015Proposition2.9} If $\sigma(R)\langle x_1,\dots,x_n\rangle$ is an SPBW extension of $R$, then
\begin{itemize}
    \item[{\rm (1)}] If $\alpha=(\alpha_1,\dotsc, \alpha_n)\in \mathbb{N}^{n}$ and $r\in R$ with $r\neq 0$, then  
{\footnotesize{\begin{align*}
x^{\alpha}r = &\ x_1^{\alpha_1}x_2^{\alpha_2}\dotsb x_{n-1}^{\alpha_{n-1}}x_n^{\alpha_n}r = x_1^{\alpha_1}\dotsb x_{n-1}^{\alpha_{n-1}}\biggl(\sum_{j=1}^{\alpha_n}x_n^{\alpha_{n}-j}\delta_n(\sigma_n^{j-1}(r))x_n^{j-1}\biggr)\\
&\ + x_1^{\alpha_1}\dotsb x_{n-2}^{\alpha_{n-2}}\biggl(\sum_{j=1}^{\alpha_{n-1}}x_{n-1}^{\alpha_{n-1}-j}\delta_{n-1}(\sigma_{n-1}^{j-1}(\sigma_n^{\alpha_n}(r)))x_{n-1}^{j-1}\biggr)x_n^{\alpha_n}\\
&\ + x_1^{\alpha_1}\dotsb x_{n-3}^{\alpha_{n-3}}\biggl(\sum_{j=1}^{\alpha_{n-2}} x_{n-2}^{\alpha_{n-2}-j}\delta_{n-2}(\sigma_{n-2}^{j-1}(\sigma_{n-1}^{\alpha_{n-1}}(\sigma_n^{\alpha_n}(r))))x_{n-2}^{j-1}\biggr)x_{n-1}^{\alpha_{n-1}}x_n^{\alpha_n}\\
&\ + \dotsb + x_1^{\alpha_1}\biggl( \sum_{j=1}^{\alpha_2}x_2^{\alpha_2-j}\delta_2(\sigma_2^{j-1}(\sigma_3^{\alpha_3}(\sigma_4^{\alpha_4}(\dotsb (\sigma_n^{\alpha_n}(r))))))x_2^{j-1}\biggr)x_3^{\alpha_3}x_4^{\alpha_4}\dotsb x_{n-1}^{\alpha_{n-1}}x_n^{\alpha_n} \\
&\ + \sigma_1^{\alpha_1}(\sigma_2^{\alpha_2}(\dotsb (\sigma_n^{\alpha_n}(r))))x_1^{\alpha_1}\dotsb x_n^{\alpha_n}, \ \ \ \ \ \ \ \ \ \ \sigma_j^{0}:={\rm id}_R\ \ {\rm for}\ \ 1\le j\le n.
\end{align*}}}
    
\item[{\rm (2)}] If $a_i, b_j\in R$ and $X_i:=x_1^{\alpha_{i1}}\dotsb x_n^{\alpha_{in}},\ Y_j:=x_1^{\beta_{j1}}\dotsb x_n^{\beta_{jn}}$, when we compute every summand of $a_iX_ib_jY_j$ we obtain products of the coefficient $a_i$ with several evaluations of $b_j$ in $\sigma$'s and $\delta$'s depending on the coordinates of $\alpha_i$. This assertion follows from the expression:
\begin{align*}
a_iX_ib_jY_j = &\ a_i\sigma^{\alpha_{i}}(b_j)x^{\alpha_i}x^{\beta_j} + a_ip_{\alpha_{i1}, \sigma_{i2}^{\alpha_{i2}}(\dotsb (\sigma_{in}^{\alpha_{in}}(b_j)))} x_2^{\alpha_{i2}}\dotsb x_n^{\alpha_{in}}x^{\beta_j} \\
&\ + a_i x_1^{\alpha_{i1}}p_{\alpha_{i2}, \sigma_3^{\alpha_{i3}}(\dotsb (\sigma_{{in}}^{\alpha_{in}}(b_j)))} x_3^{\alpha_{i3}}\dotsb x_n^{\alpha_{in}}x^{\beta_j} \\
&\ + a_i x_1^{\alpha_{i1}}x_2^{\alpha_{i2}}p_{\alpha_{i3}, \sigma_{i4}^{\alpha_{i4}} (\dotsb (\sigma_{in}^{\alpha_{in}}(b_j)))} x_4^{\alpha_{i4}}\dotsb x_n^{\alpha_{in}}x^{\beta_j}\\
&\ + \dotsb + a_i x_1^{\alpha_{i1}}x_2^{\alpha_{i2}} \dotsb x_{i(n-2)}^{\alpha_{i(n-2)}}p_{\alpha_{i(n-1)}, \sigma_{in}^{\alpha_{in}}(b_j)}x_n^{\alpha_{in}}x^{\beta_j} \\
&\ + a_i x_1^{\alpha_{i1}}\dotsb x_{i(n-1)}^{\alpha_{i(n-1)}}p_{\alpha_{in}, b_j}x^{\beta_j}.
\end{align*}
\end{itemize}
\end{proposition}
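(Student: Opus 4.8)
The plan is to derive both formulas from the single commutation rule $x_ir=\sigma_i(r)x_i+\delta_i(r)$ provided by \cite[Proposition 3]{LezamaGallego2011}, treating part (1) as the substantial statement and part (2) as a formal consequence. First I would isolate the one-variable identity
\[
x_i^{m}r=\sigma_i^{m}(r)x_i^{m}+\sum_{j=1}^{m}x_i^{m-j}\delta_i(\sigma_i^{j-1}(r))x_i^{j-1},
\]
valid for all $m\ge 1$, $1\le i\le n$, and $r\in R$, and prove it by induction on $m$. The base case $m=1$ is exactly the defining relation. For the inductive step I would left-multiply the identity for $m$ by $x_i$ and move $x_i$ past the leading coefficient via $x_i\sigma_i^{m}(r)=\sigma_i^{m+1}(r)x_i+\delta_i(\sigma_i^{m}(r))$; the new summand $\delta_i(\sigma_i^{m}(r))x_i^{m}$ is precisely the $j=m+1$ term, while left-multiplying the old sum by $x_i$ shifts it into the terms $j=1,\dots,m$. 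Only the defining relation (applied to the element $\sigma_i^{m}(r)\in R$) is used here; the Leibniz rule for $\delta_i$ is not needed.

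Next I would obtain part (1) by iterating this identity, pushing $r$ to the left through the blocks $x_n^{\alpha_n}$, then $x_{n-1}^{\alpha_{n-1}}$, and so on down to $x_1^{\alpha_1}$. Writing $x^{\alpha}r=x_1^{\alpha_1}\cdots x_{n-1}^{\alpha_{n-1}}(x_n^{\alpha_n}r)$ and applying the one-variable identity to $x_n^{\alpha_n}r$ splits the product into the $\delta_n$-remainder $\sum_{j=1}^{\alpha_n}x_n^{\alpha_n-j}\delta_n(\sigma_n^{j-1}(r))x_n^{j-1}$, which becomes the first displayed line, and the pure-$\sigma$ term $\sigma_n^{\alpha_n}(r)x_n^{\alpha_n}$. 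Applying the identity to $x_{n-1}^{\alpha_{n-1}}\sigma_n^{\alpha_n}(r)$ produces the second line, now with argument $\sigma_n^{\alpha_n}(r)$, and a surviving coefficient $\sigma_{n-1}^{\alpha_{n-1}}(\sigma_n^{\alpha_n}(r))$; repeating through $x_1^{\alpha_1}$ shows that after passing the $k$-th block the surviving coefficient is the accumulated composition $\sigma_k^{\alpha_k}\circ\cdots\circ\sigma_n^{\alpha_n}(r)$. Hence the final pure-$\sigma$ term is $\sigma^{\alpha}(r)x^{\alpha}$, and the accumulated remainders assemble into exactly the stated lines. This step can be phrased as an induction on the number of blocks still lying to the right of $r$. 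The main obstacle is purely bookkeeping: keeping the nested compositions $\sigma_k^{\alpha_k}\circ\cdots\circ\sigma_n^{\alpha_n}$ and the summation ranges aligned with the claimed display, since no algebraic input beyond the one-variable identity is involved.

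Finally, part (2) would follow by specializing part (1) with $r=b_j$ and $\alpha=\alpha_i$. This expresses $X_ib_j=x^{\alpha_i}b_j$ as a leading term $\sigma^{\alpha_i}(b_j)x^{\alpha_i}$ plus the remainder coming from each variable, where the remainder at $x_k$ is exactly the single-variable quantity $p_{\alpha_{ik},\,c_k}$ of Proposition \ref{coefficientes} with $c_k=\sigma_{k+1}^{\alpha_{i(k+1)}}\circ\cdots\circ\sigma_n^{\alpha_{in}}(b_j)$ (read as $b_j$ when $k=n$). Left-multiplying by $a_i$, right-multiplying by $Y_j=x^{\beta_j}$, and distributing yields the displayed expansion; the listing of the lines from the $x_1$-remainder down to the $x_n$-remainder is simply the reverse ordering of the terms produced in part (1). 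In particular, each summand of $a_iX_ib_jY_j$ is a product of $a_i$ with an evaluation of $b_j$ under compositions of the maps in $\Sigma$ and $\Delta$ dictated by the coordinates of $\alpha_i$, which is the content of the assertion.
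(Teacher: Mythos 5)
The paper gives no proof of this proposition---it is quoted verbatim from the cited source \cite{Reyes2015Baer}---so there is no in-paper argument to diverge from; your derivation is correct and is exactly the standard one: establish the single-variable identity $x_i^{m}r=\sigma_i^{m}(r)x_i^{m}+\sum_{j=1}^{m}x_i^{m-j}\delta_i(\sigma_i^{j-1}(r))x_i^{j-1}$ by induction on $m$ using only $x_ir=\sigma_i(r)x_i+\delta_i(r)$, iterate it block by block from $x_n^{\alpha_n}$ down to $x_1^{\alpha_1}$, and obtain part (2) by specializing $r=b_j$, $\alpha=\alpha_i$ and bracketing with $a_i$ on the left and $x^{\beta_j}$ on the right, with $p_{\alpha_{ik},\,c_k}$ denoting the single-variable remainder of $x_k^{\alpha_{ik}}c_k$ in the sense of Proposition \ref{coefficientes}. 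One detail your computation makes visible: the display in part (1) as printed stops at the $\delta_2$-line and omits the $x_1$-remainder term $\bigl(\sum_{j=1}^{\alpha_1}x_1^{\alpha_1-j}\delta_1(\sigma_1^{j-1}(\sigma_2^{\alpha_2}(\dotsb(\sigma_n^{\alpha_n}(r)))))x_1^{j-1}\bigr)x_2^{\alpha_2}\dotsb x_n^{\alpha_n}$, which your iteration correctly produces and which does reappear in part (2) as the summand involving $p_{\alpha_{i1},\,\sigma_2^{\alpha_{i2}}(\dotsb(\sigma_n^{\alpha_{in}}(b_j)))}$.
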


In the setting of SPBW extensions, Reyes and Su\'arez \cite{ReyesSuarez2018-3} defined the $\Sigma$-rigidness condition as follows: If $\Sigma=\{\sigma_1,\dots,\sigma_n\}$ is a family of endomorphisms of $R$, then $\Sigma$ is called a \emph{rigid endomorphisms family} if $a\sigma^{\alpha}(a)= 0$ implies that $a = 0$, where $a\in R$ and $\alpha\in \mathbb{N}^n$, and $R$ is a $\Sigma$-\emph{rigid} if there exists a rigid endomorphisms family $\Sigma$ of $R$ \cite[Definition 3.1]{ReyesSuarez2018-3}. Hashemi et al. \cite{HashemiKhalilAlhevaz2017}, and Reyes and Su\'arez \cite {ReyesSuarez2018UMA} introduced independently $(\Sigma, \Delta)$-{\em compatible rings} as a generalization of $(\sigma, \delta)$-compatible rings. Examples, ring and module theoretic properties of these objects have been investigated \cite{HashemiKhalilAlhevaz2017, HashemiKhalilAlhevaz2019, LouzariReyes2020, OuyangBirkenmeier2012, ReyesSuarez2018UMA}.

\begin{definition}[{\cite[Definition 3.1]{HashemiKhalilAlhevaz2017}}; {\cite[Definition 3.2]{ReyesSuarez2018UMA}}]\label{Definition3.52008}
$R$ is called {\it $\Sigma$-compatible} if for all $a, b \in R$, we obtain that $a\sigma^{\alpha}(b) = 0$ if and only if $ab = 0$, where $\alpha \in \mathbb{N}^n$; $R$ is {\it $\Delta$-compatible} if for all $a, b \in R$, we have that $ab = 0$ implies that $a\delta^{\beta}(b)=0$, where $\beta \in \mathbb{N}^n$. If $R$ is both $\Sigma$-compatible and $\Delta$-compatible, then $R$ is called {\it $(\Sigma, \Delta)$-compatible}.
\end{definition}

The following proposition is the natural generalization of \cite[Lemma 2.1]{HashemiMoussavi2005}.

\begin{proposition}[{\cite[Proposition 3.8]{ReyesSuarez2018UMA}}] \label{colosss}
If $R$ is $(\Sigma,\Delta)$-compatible, then for every $a, b
\in R$, the following assertions hold:
\begin{enumerate}
\item [\rm (1)] if $ab=0$, then $a\sigma^{\theta}(b) = \sigma^{\theta}(a)b=0$, for each $\theta\in \mathbb{N}^{n}$.

\item [\rm (2)] If $\sigma^{\beta}(a)b=0$ for some $\beta\in \mathbb{N}^{n}$, then $ab=0$.

\item [\rm (3)] If $ab=0$, then $\sigma^{\theta}(a)\delta^{\beta}(b)= \delta^{\beta}(a)\sigma^{\theta}(b) = 0$, for every $\theta, \beta\in \mathbb{N}^{n}$.
\end{enumerate}
\end{proposition}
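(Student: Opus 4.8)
The plan is to reduce everything to two structural facts and then feed them through the two halves of $(\Sigma,\Delta)$-compatibility. The first fact is that each $\sigma^{\theta}=\sigma_1^{\theta_1}\circ\dotsb\circ\sigma_n^{\theta_n}$ is an \emph{injective} ring endomorphism of $R$, being a composite of the injective endomorphisms $\sigma_i$; in particular $\sigma^{\theta}(ab)=\sigma^{\theta}(a)\sigma^{\theta}(b)$ and $\sigma^{\theta}(x)=0$ forces $x=0$. The second is that each $\delta_i$ is a $\sigma_i$-derivation, so it obeys the Leibniz rule $\delta_i(rs)=\sigma_i(r)\delta_i(s)+\delta_i(r)s$ and $\delta_i(0)=0$.

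For assertion (1), the equality $a\sigma^{\theta}(b)=0$ is immediate from $\Sigma$-compatibility, which asserts $ab=0 \iff a\sigma^{\theta}(b)=0$. For $\sigma^{\theta}(a)b=0$ I would apply the endomorphism $\sigma^{\theta}$ to $ab=0$ to obtain $\sigma^{\theta}(a)\sigma^{\theta}(b)=0$, and then invoke $\Sigma$-compatibility with first factor $\sigma^{\theta}(a)$ to strip the $\sigma^{\theta}$ off the second factor, giving $\sigma^{\theta}(a)b=0$. For assertion (2), from $\sigma^{\beta}(a)b=0$, $\Sigma$-compatibility (first factor $\sigma^{\beta}(a)$) yields $\sigma^{\beta}(a)\sigma^{\beta}(b)=\sigma^{\beta}(ab)=0$, whence injectivity of $\sigma^{\beta}$ forces $ab=0$.

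For assertion (3), the equation $\sigma^{\theta}(a)\delta^{\beta}(b)=0$ is easy: $\Delta$-compatibility turns $ab=0$ into $a\delta^{\beta}(b)=0$, and the second half of (1), applied with second factor $\delta^{\beta}(b)$, inserts the $\sigma^{\theta}$. The genuinely delicate point, and the step I expect to be the main obstacle, is the remaining equation $\delta^{\beta}(a)\sigma^{\theta}(b)=0$, because $\Delta$-compatibility only controls a $\delta$ applied to the \emph{right} factor, whereas here the $\delta$ sits on the \emph{left}. To handle it I would first establish the auxiliary claim that $cb=0$ implies $\delta_i(c)b=0$ for each single index $i$: applying $\delta_i$ to $cb=0$ and using the Leibniz rule gives $\sigma_i(c)\delta_i(b)+\delta_i(c)b=0$, while $\Delta$-compatibility ($c\delta_i(b)=0$) followed by (1) ($\sigma_i(c)\delta_i(b)=0$) kills the first summand, leaving $\delta_i(c)b=0$.

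With the auxiliary claim in hand, I would iterate it coordinate by coordinate along the decomposition $\delta^{\beta}=\delta_1^{\beta_1}\circ\dotsb\circ\delta_n^{\beta_n}$: starting from $ab=0$, apply the claim $\beta_n$ times with $i=n$ to reach $\delta_n^{\beta_n}(a)b=0$, then $\beta_{n-1}$ times with $i=n-1$ to the new left factor, and so on, finally arriving at $\delta^{\beta}(a)b=0$. A last application of $\Sigma$-compatibility inserts $\sigma^{\theta}$ on the right, giving $\delta^{\beta}(a)\sigma^{\theta}(b)=0$. The only care needed in the iteration is that the auxiliary claim must hold for an \emph{arbitrary} left factor $c$ with $b$ fixed, which is exactly how it was proved, so the induction on $|\beta|$ goes through cleanly.
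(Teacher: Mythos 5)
Your proof is correct. The paper does not actually prove this proposition---it imports it verbatim from the cited source \cite[Proposition 3.8]{ReyesSuarez2018UMA}---but your argument is exactly the standard one behind that result (the multivariable analogue of \cite[Lemma 2.1]{HashemiMoussavi2005}): the routine parts follow from the two halves of compatibility plus injectivity of the $\sigma_i$, and the only delicate step, $\delta^{\beta}(a)b=0$, is handled precisely as in the source, via the Leibniz identity $\delta_i(cb)=\sigma_i(c)\delta_i(b)+\delta_i(c)b$ and coordinatewise iteration along $\delta^{\beta}=\delta_1^{\beta_1}\circ\dotsb\circ\delta_n^{\beta_n}$.
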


If $R$ is $\Sigma$-rigid, it follows that $R$ is $(\Sigma, \Delta)$-compatible \cite[Proposition 3.4]{ReyesSuarez2018UMA}, but the converse does not hold \cite[Example 3.6] {ReyesSuarez2018UMA}. If $R$ is reduced, then $\Sigma$-rigid and $(\Sigma, \Delta)$-compatible coincide and if $A=\sigma(R)\langle x_1,\dots,x_n\rangle$ is a SPBW extension of $R$ then $A$ is reduced \cite[Theorem 3.9]{ReyesSuarez2018UMA}.

We present some examples of SPBW extensions over compatible rings.

\begin{example} 
 Commutative polynomial rings, PBW extensions, the algebra of linear partial differential operators, the algebra of linear partial shift operators, the algebra of linear partial difference operators, and the algebra of linear partial $q$-differential operators, and the class of diffusion algebras (for a detailed description, see \cite{Fajardoetal2020, Higuera2020, HigueraReyes2022}). Other examples include: Weyl algebras, multiplicative analogue of the Weyl algebra, some quantum Weyl algebras as $A_2(J_{a,b})$, the quantum algebra $U'(\mathfrak{so}(3, \Bbbk))$, the family of 3-dimensional skew polynomial algebras; Woronowicz algebra $W_v(\mathfrak{sl}(2, \Bbbk))$, the complex algebra $V_q({\mathfrak{sl}}_3(C))$, the Hayashi algebra $W_q(J)$, $q$-Heisenberg algebra ${\bf H}_n(q)$, and others.
\end{example}

\section{Skew PBW extensions having Property \((a.c.)\)}\label{MainResults}

In this section, we investigate SPBW extensions that satisfy Property \((a.c.)\). Hashemi et al.~\cite{HashemiMoussavi2005} explored a generalization of $\sigma$-rigid rings through the condition (SQA1), which represents a version of quasi-Armendariz rings for skew polynomial rings. We say that $R$ satisfies the (SQA1) condition if whenever $f(x)R[x;\sigma,\delta]g(x) = 0$ for $f(x) = \sum_{i=0}^{m} a_i x^i$ and $g(x) = \sum_{j=0}^{m} b_j x^j \in R[x;\sigma,\delta]$, then $a_i R b_j = 0$ for all $i,j$. Notice that if $R$ is $\sigma$-rigid, then $R$ also satisfies (SQA1). In \cite{ReyesSuarez2018UMA}, the notions (SA1) is defined for SPBW extensions with the aim of generalizing the results about Baer, quasi-Baer, p.p. and p.q.-Baer rings for $\Sigma$-rigid rings to the $(\Sigma,\Delta)$-compatible.

\begin{definition}[{\cite[Definition 4.1]{ReyesSuarez2018UMA}}]
Let \(\sigma(R)\langle
x_1,\dots,x_n\rangle\) be an SPBW extension of $R$. We say that $R$ satisfies the condition ${\rm (}SA1{\rm )}$ if whenever $fg = 0$ for $f = a_0 + a_1X_1 + \cdots + a_mX_m$ and
 $g = b_0 +b_1Y_1 +\cdots +b_tY_t$ elements of \(\sigma(R)\langle
x_1,\dots,x_n\rangle\), then $a_ib_j = 0$, for every $i,j$.
\end{definition}

Recall that a ring $R$ is called a \textit{Baer ring} if the right annihilator of every nonempty subset of $R$ is generated by an idempotent. Equivalently, for any nonempty subset $X \subseteq R$, there exists $e \in R$ with $e^2=e$ such that 
\(r_R(X)=eR.\). Under certain compatibility 
conditions, the following theorem shows how the Baer property implies the annihilator condition in skew PBW extensions.

\begin{theorem} \label{BaerSA1Theorem}
If \(R\) is a Baer and $(\Sigma,\Delta)$-compatible ring which satisfies the condition {\rm (}SA1{\rm )}, then \(\sigma(R)\langle
x_1,\dots,x_n\rangle\) has Property \((a.c.)\) on the right.
\end{theorem}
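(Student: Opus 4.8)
The plan is to reduce annihilators in $A$ to annihilators of coefficients in $R$, and then to exploit the Baer hypothesis to produce a single generator. First I would prove a coefficient characterization: for $f=\sum_i a_iX_i$ and $g=\sum_j b_jY_j$ in $A$, one has $fAg=0$ if and only if $a_iRb_j=0$ for all $i,j$. For the forward implication, taking $h=r\in R$ gives $f(rg)=0$; since $rg=\sum_j (rb_j)Y_j$, the condition (SA1) applied to the product $f\cdot(rg)=0$ yields $a_i(rb_j)=0$ for all $i,j$, and letting $r$ range over $R$ gives $a_iRb_j=0$. For the converse I would expand a single product $a_iX_i\,cZ\,b_jY_j$ using Proposition \ref{Reyes2015Proposition2.9}: every coefficient that arises is a finite sum of terms of the form $a_i\,\mu(c)\,\lambda(b_j)$, where $\mu,\lambda$ are compositions of maps in $\Sigma$ and $\Delta$; then $a_iRb_j=0$ together with $(\Sigma,\Delta)$-compatibility (Proposition \ref{colosss}) forces each such term to vanish, so $fAg=0$.

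Next I would apply this to $I=f_1A+\cdots+f_kA$. Since $r_A(I)=\bigcap_s r_A(f_sA)$, the characterization gives $r_A(I)=\{\,g\in A:\text{every coefficient of }g\text{ lies in }r_R(T)\,\}$, where $T=\sum_{s,i}a_{s,i}R$ is the right ideal of $R$ generated by all coefficients of the generators $f_s$. The decisive observation is that $r_R(T)=r_R(J)$, where $J=\sum_{s,i}Ra_{s,i}R$ is the two-sided ideal generated by the same coefficients; indeed $aRx=0$ is equivalent to $RaRx=0$, so both annihilators equal $\{x: a_{s,i}Rx=0\text{ for all }s,i\}$.

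Now the Baer hypothesis enters. Since $R$ is Baer it is quasi-Baer, so by the standard description of annihilators of ideals in quasi-Baer rings the right annihilator of the two-sided ideal $J$ is generated by a left semicentral idempotent, $r_R(J)=eR$ with $re=ere$ for all $r\in R$. I would then take $c=1-e\in R\subseteq A$. Left semicentrality gives $(1-e)re=re-ere=0$ for all $r$, whence $r_R((1-e)R)=eR$: the inclusion $\subseteq$ is immediate by evaluating at $r=1$, and for $\supseteq$, if $x=ex$ then $(1-e)rx=(1-e)rex=0$ since $(1-e)re=0$. Applying the coefficient characterization once more to the single coefficient $1-e$, we obtain $r_A((1-e)A)=\{g:\text{coefficients of }g\text{ lie in }r_R((1-e)R)=eR=r_R(T)\}=r_A(I)$, which proves Property $(a.c.)$ on the right with witness $c=1-e$.

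The step I expect to be the main obstacle is the converse direction of the coefficient characterization: controlling every coefficient produced by the triple product $a_iX_i\,cZ\,b_jY_j$ requires the full strength of the expansion in Proposition \ref{Reyes2015Proposition2.9} together with a careful induction showing that $(\Sigma,\Delta)$-compatibility propagates an equality $a_iRb_j=0$ through arbitrary compositions of the $\sigma_\ell$ and $\delta_\ell$. The passage to the two-sided ideal $J$, and the resulting availability of a \emph{left semicentral} idempotent, is precisely what makes $c=1-e$ work; without semicentrality the inclusion $eR\subseteq r_R((1-e)R)$ can fail.
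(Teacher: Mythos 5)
Your argument is correct, but it takes a genuinely different route from the paper. The paper's proof is a two-step citation: under the stated hypotheses the extension $A=\sigma(R)\langle x_1,\dots,x_n\rangle$ is itself a Baer ring by \cite[Theorem 4.2]{ReyesSuarez2018UMA}, and every Baer ring has Property $(a.c.)$ on the right by \cite[Proposition 2.1]{MalkiLouzari2024}. You instead bypass the Baerness of $A$ entirely: you characterize $r_A(fA)$ coefficient-wise via (SA1) (forward direction) and $(\Sigma,\Delta)$-compatibility (converse direction), reduce $r_A(I)$ to $r_R(T)$ for the right ideal $T$ of $R$ generated by all coefficients of the generators, and then exhibit an explicit witness $c=1-e\in R$ coming from a left semicentral idempotent with $r_R(T)=eR$. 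All the individual steps check out: the identity $r_R(T)=r_R(J)$ for the associated two-sided ideal, the left semicentrality of $e$ (which already follows from $r_R(T)$ being a two-sided ideal since $T$ is a right ideal), and the verification $r_R((1-e)R)=eR$. The ``careful induction'' you flag for propagating $a_iRb_j=0$ through arbitrary compositions of the $\sigma_\ell$ and $\delta_\ell$ is exactly the iterated use of Proposition~\ref{colosss} that underlies \cite[Theorem 4.2]{ReyesSuarez2018UMA}, so it is a real but standard step. What your approach buys is a concrete generator lying in the coefficient ring and the observation that only annihilators of finitely generated right ideals of $R$ are ever used, so the argument is closer in spirit to the p.q.-Baer situation of Theorem~\ref{pqBaerTheorem}; what the paper's approach buys is brevity and reuse of existing structural results.
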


\begin{proof} If $A:=\sigma(R)\langle x_1,\dots,x_n\rangle$ is an SPBW extension of a Baer ring $R$ which satisfies the condition {\rm (}SA1{\rm )}, then $A$ is a Baer ring by \cite[Theorem 4.2]{ReyesSuarez2018UMA}, and hence $A$ has Property \((a.c.)\) on the right by \cite[Proposition 2.1]{MalkiLouzari2024}.
\end{proof}

A ring $R$ is called a \textit{right p.p.-ring} if the right annihilator of every element is generated by an idempotent. 
In other words, for each $a \in R$, there exists $e \in R$ with $e^2=e$ such that 
\(r_R(a)=eR.\) 
Similarly, $R$ is a \textit{left p.p.-ring} if the left annihilator of every element is generated by an idempotent. 
Right and left p.p.-rings are natural generalizations of Baer rings, 
because in Baer rings the annihilator of any subset (not just single elements) is generated by an idempotent element of $R$. 
For instance, any commutative domain is trivially a p.p.-ring because the annihilator of a non-zero element is zero, 
which is generated by the idempotent $0$. The following theorem shows that for an Abelian and NI ring, the p.p.-property guarantees that the SPBW extension has Property \((a.c.)\).

\begin{theorem}\label{AbelianNIppTheorem}
Let $R$ be a $(\Sigma,\Delta)$-compatible ring which is Abelian and NI, which
 satisfies {\rm (}SA1{\rm )}. If $R$ is a right {\rm (}resp., left{\rm )} p.p.-ring, then \(\sigma(R)\langle
x_1,\dots,x_n\rangle\) has Property \((a.c.)\) on the right {\rm (}resp., left{\rm )}.
\end{theorem}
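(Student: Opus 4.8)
The plan is to prove the right-hand version directly, by upgrading the conclusion to the statement that $A:=\sigma(R)\langle x_1,\dots,x_n\rangle$ is a right p.p.-ring in which every principal right annihilator is generated by a \emph{central} idempotent coming from $R$; Property $(a.c.)$ then falls out formally, and the left-hand version is obtained by the symmetric argument.

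First I would treat a single element. Fix $f=a_0+a_1X_1+\cdots+a_mX_m\in A$. Because $R$ is right p.p.\ and Abelian, each $r_R(a_i)=\varepsilon_iR$ with $\varepsilon_i$ a central idempotent, and since these idempotents are central the finite intersection stays principal: $\bigcap_{i=0}^{m}r_R(a_i)=eR$, where $e:=\prod_{i=0}^{m}\varepsilon_i$ is again a central idempotent of $R$. I claim $r_A(f)=eA$. The inclusion $r_A(f)\subseteq eA$ is where the hypothesis (SA1) does its work: if $g=b_0+b_1Y_1+\cdots+b_tY_t$ satisfies $fg=0$, then $a_ib_j=0$ for all $i,j$, so each $b_j\in\bigcap_i r_R(a_i)=eR$, whence $b_j=eb_j$ and $g=eg\in eA$.

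For the reverse inclusion I would first pin down the behaviour of $e$ under the defining data. Applying $(\Sigma,\Delta)$-compatibility (Definition \ref{Definition3.52008}, together with Proposition \ref{colosss}) to the identities $e(1-e)=0$ and $(1-e)e=0$ forces $\sigma_k(e)=e$ and $\delta_k(e)=0$ for every $k$ (here one uses $\sigma_k(1)=1$ and $\delta_k(1)=0$); hence $x_ke=ex_k$, so $e\in C(A)$ and $X_ie=eX_i$ for every standard monomial $X_i$. Since $e\in r_R(a_i)$ gives $a_ie=0$ for all $i$, we obtain $fe=\sum_i a_iX_ie=\sum_i (a_ie)X_i=0$, so $e\in r_A(f)$ and therefore $eA\subseteq r_A(f)$. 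This proves $r_A(f)=eA$ with $e$ a central idempotent, so $A$ is right p.p. Property $(a.c.)$ is then immediate: given a finitely generated right ideal $I=f_1A+\cdots+f_sA$, the previous step produces central idempotents $e^{(1)},\dots,e^{(s)}$ with $r_A(f_\ell)=e^{(\ell)}A$, and centrality gives $r_A(f_\ell A)=r_A(f_\ell)$, so that $r_A(I)=\bigcap_\ell r_A(f_\ell)=\bigl(\prod_\ell e^{(\ell)}\bigr)A=:e_0A$ with $e_0$ central idempotent; taking $c:=1-e_0$ yields $r_A(cA)=r_A(1-e_0)=e_0A=r_A(I)$.

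I expect the genuine obstacle to be the reverse inclusion $eA\subseteq r_A(f)$, that is, transporting the single relation $a_ie=0$ in $R$ to the relation $fe=0$ in $A$. This is precisely what $(\Sigma,\Delta)$-compatibility delivers: applied to the idempotent relations it forces $\sigma_k(e)=e$ and $\delta_k(e)=0$, so that in the rewriting of $X_ie$ via Proposition \ref{Reyes2015Proposition2.9} all the correction terms carrying $\delta$'s vanish and $X_i$ commutes with $e$ outright, after which $a_ie=0$ closes the argument. The Abelian hypothesis is what keeps the relevant idempotents central, so that finite intersections of annihilators remain principal; the NI hypothesis serves, in combination with the existing p.p.-transfer results for SPBW extensions, to guarantee that no idempotents lying outside $R$ disturb this description.
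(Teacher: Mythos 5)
Your proof is correct, but it takes a genuinely different route from the paper. The paper's argument is a short citation chain: it invokes \cite[Theorem 4.7]{ReyesSuarez2018UMA} to transfer the right p.p.\ property from $R$ to $A:=\sigma(R)\langle x_1,\dots,x_n\rangle$, \cite[Corollary 3.9]{Chacon} to transfer the Abelian property, and then \cite[Proposition 2.3]{MalkiLouzari2024} (an Abelian right p.p.-ring has Property $(a.c.)$ on the right). You instead prove everything from scratch: your computation showing $\sigma_k(e)=e$ and $\delta_k(e)=0$ for an idempotent $e$ of a $(\Sigma,\Delta)$-compatible ring is precisely the standard lemma underlying those transfer theorems, and your identity $r_A(f)=eA$ with $e=\prod_i\varepsilon_i$ a central idempotent of $R$ reproves the p.p.\ transfer and the passage to Property $(a.c.)$ simultaneously, in the sharper form that every principal (right) annihilator in $A$ is generated by a central idempotent of the coefficient ring. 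All the individual steps check out: (SA1) gives $r_A(f)\subseteq eA$; compatibility applied to $e(1-e)=(1-e)e=0$ gives $x_ke=ex_k$ and hence $e\in C(A)$; centrality then yields $r_A(f_\ell A)=r_A(f_\ell)$ and makes the intersection $r_A(I)=e_0A=r_A((1-e_0)A)$ work; and the left-hand case is symmetric because (SA1) is a condition on arbitrary zero products. What your approach buys is self-containedness and a stronger conclusion --- indeed, your argument never actually uses the NI hypothesis, so your closing remark attributing a role to NI does not reflect your own proof (NI is needed only for the external transfer theorem the paper cites, not for your direct computation), and you have in effect shown the theorem holds without it. What the paper's approach buys is brevity and reuse of established machinery.
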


\begin{proof}
Set $A:=\sigma(R)\langle x_1,\dots,x_n\rangle$. 
Since $R$ is $(\Sigma,\Delta)$-compatible, Abelian and NI, and $R$ is a right p.p.-ring, it follows from \cite[Theorem 4.7]{ReyesSuarez2018UMA} that $A$ is 
a right p.p.-ring, and $A$ is Abelian by \cite[Corollary 3.9]{Chacon} Every right p.p.-ring Abelian has Property $(a.c.) $ on the right by \cite[Proposition 2.3]{MalkiLouzari2024}, hence $A$ has Property $(a.c.)$ on the right.
\end{proof}

\begin{theorem}\label{BijectiveSPBWppTheorem}
Let \(\sigma(R)\langle
x_1,\dots,x_n\rangle\) be a bijective SPBW of a $\Sigma$-rigid, Abelian and NI ring. If $R$ is a right {\rm (}resp., left{\rm )} p.p.-ring, then \(\sigma(R)\langle
x_1,\dots,x_n\rangle\) has Property \((a.c.)\) on the right {\rm (}resp., left{\rm )}.
\end{theorem}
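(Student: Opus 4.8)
The plan is to reduce the statement to Theorem~\ref{AbelianNIppTheorem}, which already yields Property $(a.c.)$ on the right (resp., left) for a right (resp., left) p.p.-ring. So the task is to check that the present hypotheses force all the hypotheses of that theorem, namely that $R$ is $(\Sigma,\Delta)$-compatible, Abelian, NI, and satisfies the condition (SA1). Abelian, NI and the p.p.-property are assumed outright, so the whole content lies in extracting $(\Sigma,\Delta)$-compatibility and (SA1) from the $\Sigma$-rigidity of $R$. In fact, since a $\Sigma$-rigid ring is reduced (take $\alpha=0$ in the defining implication $a\sigma^{\alpha}(a)=0\Rightarrow a=0$, so that $a^{2}=0$ forces $a=0$), the Abelian and NI assumptions are automatic; I keep them here only to mirror the statement of Theorem~\ref{AbelianNIppTheorem}.

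Compatibility is immediate: as recorded after Definition~\ref{Definition3.52008}, every $\Sigma$-rigid ring is $(\Sigma,\Delta)$-compatible by \cite[Proposition 3.4]{ReyesSuarez2018UMA}. For (SA1) I would argue as follows. Since $R$ is $\Sigma$-rigid it is reduced, and hence the bijective extension $A:=\sigma(R)\langle x_1,\dots,x_n\rangle$ is itself reduced by \cite[Theorem 3.9]{ReyesSuarez2018UMA}. Given $f=a_0+a_1X_1+\cdots+a_mX_m$ and $g=b_0+b_1Y_1+\cdots+b_tY_t$ in $A$ with $fg=0$, I would expand $fg$ using the multiplication rule of Proposition~\ref{Reyes2015Proposition2.9}(2), which writes each coefficient of $fg$ as a sum of products of $a_i$ with $\sigma$- and $\delta$-evaluations of the $b_j$; reducedness of $A$ together with the compatibility identities of Proposition~\ref{colosss} then allows me to strip away these $\sigma$'s and $\delta$'s and conclude $a_ib_j=0$ for all $i,j$, which is exactly (SA1). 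This is the $\Sigma$-rigid instance of (SA1) treated in \cite{ReyesSuarez2018UMA}.

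Once $(\Sigma,\Delta)$-compatibility and (SA1) are in place, all hypotheses of Theorem~\ref{AbelianNIppTheorem} hold, and that theorem delivers Property $(a.c.)$ on the right (resp., left) for $A$. I expect the main obstacle to be the (SA1) step: one must control the coefficients of a product in $A$, where the lower-degree correction terms and the $\sigma$-$\delta$ twists recorded in Proposition~\ref{Reyes2015Proposition2.9} obstruct a naive coefficient comparison, and it is precisely the reducedness of $A$---forced by $\Sigma$-rigidity, for which the injectivity (and here bijectivity) of the $\sigma_i$ is the crucial input---combined with compatibility that controls them. The left-hand version follows by the symmetric argument, exactly as in Theorem~\ref{AbelianNIppTheorem}.
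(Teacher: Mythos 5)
Your proposal is correct, but it takes a slightly different route from the paper. The paper proves this theorem directly by the same three-step citation chain it used for Theorem~\ref{AbelianNIppTheorem}: it invokes \cite[Theorem 4.7]{ReyesSuarez2018UMA} in its bijective/$\Sigma$-rigid form to conclude that $A$ is a right p.p.-ring, then \cite[Corollary 3.9]{Chacon} for $A$ Abelian, then \cite[Proposition 2.3]{MalkiLouzari2024}. You instead reduce the statement to Theorem~\ref{AbelianNIppTheorem} itself, which forces you to verify its hypotheses --- in particular the condition (SA1) --- from $\Sigma$-rigidity. That verification is genuine extra content: it is the Armendariz-type coefficient analysis for $\Sigma$-rigid coefficient rings (carried out in \cite{Reyes2015Baer} and \cite{ReyesSuarez2018UMA}), which you only sketch via reducedness of $A$ and Proposition~\ref{colosss}, but the claim is true and the sketch points at the right mechanism. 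The paper sidesteps this entirely by citing the external p.p.-transfer theorem in the hypothesis set that already covers the bijective $\Sigma$-rigid case. Your additional observation that the Abelian and NI hypotheses are redundant under $\Sigma$-rigidity (since $\Sigma$-rigid implies reduced, and reduced rings are Abelian and NI) is correct and is something the paper does not record. One small point on both your write-up and the paper's: the concluding sentence only spells out the right-handed conclusion, so the left version deserves at least the explicit remark that the symmetric citations apply.
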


\begin{proof}
Let $A:=\sigma(R)\langle x_1,\dots,x_n\rangle$ be a bijective skew PBW extension of a 
$\Sigma$-rigid, Abelian and NI ring $R$. If $R$ is right (resp., left) p.p.-ring, then by 
\cite[Theorem 4.7]{ReyesSuarez2018UMA} $A$ is a right p.p.-ring and $A$ is an Abelian ring by \cite[Corollary 3.9]{Chacon}. By \cite[Proposition 2.3]{MalkiLouzari2024}, therefore $A$ has 
Property $(a.c.)$ on the right.
\end{proof}

A ring $R$ is called a \textit{right principally quasi-Baer ring} (right p.q.-Baer) 
if the right annihilator of every principal right ideal is generated by an idempotent. 
That is, for each $a \in R$, there exists $e \in R$ with $e^2=e$ such that 
\(r_R(aR)=eR.\) 
Similarly, $R$ is \textit{left p.q.-Baer} if the left annihilator of every principal left ideal 
is generated by an idempotent. 
Right and left p.q.-Baer rings generalize p.p.-rings: every p.p.-ring is automatically p.q.-Baer, 
but the converse is not true in general. 
For example, certain matrix rings over commutative rings can be p.q.-Baer without being p.p.-rings.
The next theorem establishes that a bijective SPBW extension over a 
$(\Sigma,\Delta)$-compatible p.q.-Baer ring 
satisfies Property \((a.c.)\).

\begin{theorem}\label{pqBaerTheorem}
If \(\sigma(R)\langle
x_1,\dots,x_n\rangle\) is bijective, and \(R\) is a right {\rm (}resp., left{\rm )} p.q.-Baer ring and $(\Sigma,\Delta)$-compatible, then \(\sigma(R)\langle
x_1,\dots,x_n\rangle\) has Property \((a.c.)\) on the right {\rm (}resp., left{\rm )}.
\end{theorem}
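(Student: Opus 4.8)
The plan is to follow the same two-step template as the three preceding theorems: first transfer the p.q.-Baer property from $R$ to the extension $A := \sigma(R)\langle x_1,\dots,x_n\rangle$, and then apply a purely ring-theoretic result stating that p.q.-Baer rings satisfy Property $(a.c.)$.

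First I would set $A := \sigma(R)\langle x_1,\dots,x_n\rangle$ and record the hypotheses: $A$ is bijective and $R$ is $(\Sigma,\Delta)$-compatible and right (resp., left) p.q.-Baer. In contrast to the Baer and p.p. cases, no (SA1) assumption is imposed, which is consistent with the fact that the p.q.-Baer condition restricts only the annihilators of \emph{principal} one-sided ideals. Via the multiplication formulas of Proposition \ref{Reyes2015Proposition2.9} and the compatibility transfer of Proposition \ref{colosss}, a relation $fAg = 0$ in $A$ is controlled by annihilator relations among the $R$-coefficients of $f$ and $g$, with bijectivity ensuring the $\sigma_i$ are automorphisms so that this passage is reversible. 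These are exactly the ingredients of the p.q.-Baer transfer theorem of \cite{ReyesSuarez2018UMA}, which I would invoke to conclude that $A$ is right (resp., left) p.q.-Baer.

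Second, I would apply the result of \cite{MalkiLouzari2024} — the p.q.-Baer analogue of the propositions used in the previous proofs — that every right (resp., left) p.q.-Baer ring has Property $(a.c.)$ on the right (resp., left). Applied to $A$, this delivers the theorem, and the left statement follows from the left-handed versions of both cited results.

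The main obstacle is the transfer in the first step: unlike the Baer and p.p. results, it must exploit $(\Sigma,\Delta)$-compatibility and bijectivity directly on annihilators of principal ideals in the noncommutative ring $A$, and verifying that the cited theorem of \cite{ReyesSuarez2018UMA} covers the present hypotheses is the delicate point. The second step, by contrast, is elementary once unwound: for a finitely generated right ideal $I = f_1 A + \dots + f_k A$ one has $r_A(I) = \bigcap_i r_A(f_i A) = \bigcap_i e_i A$, where each $e_i$ is a left semicentral idempotent (so each $e_i A$ is a two-sided summand). A short computation with left semicentral idempotents gives $e_1 A \cap e_2 A = (e_2 e_1) A$ with $e_2 e_1$ again left semicentral, so by induction $r_A(I) = g A$ for a single left semicentral idempotent $g = e_k \cdots e_1$. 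Taking $c = 1 - g$ then yields $r_A(cA) = gA = r_A(I)$, which is exactly Property $(a.c.)$; once both cited results are in hand, the proof is the short citation chain above.
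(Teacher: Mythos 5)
Your proposal follows exactly the paper's argument: transfer the right (resp., left) p.q.-Baer property from $R$ to $A$ via \cite[Theorem 4.15]{ReyesSuarez2018UMA} under the bijectivity and $(\Sigma,\Delta)$-compatibility hypotheses, and then conclude Property $(a.c.)$ from the Malki--Louzari result (\cite[Proposition 2.1]{MalkiLouzari2024}) that p.q.-Baer rings satisfy it. Your extra unwinding of the second step (intersecting the $e_iA$ with left semicentral idempotents and taking $c=1-g$) is correct but is content the paper delegates entirely to the citation.
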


\begin{proof} Let $A:=\sigma(R)\langle x_1,\dots,x_n\rangle$ be a bijective SPBW extension of $R$. If $R$ is right {\rm (}resp., left{\rm )} p.q.-Baer and $(\Sigma,\Delta)$-compatible, then $A$ is right {\rm (}resp., left{\rm )} p.q.-Baer by \cite[Theorem 4.15]{ReyesSuarez2018UMA}, and hence $A$ has Property \((a.c.)\) on the right {\rm (}resp., left{\rm )} by \cite[Proposition 2.1]{MalkiLouzari2024}.
\end{proof}

\section{Examples}\label{Examplespaper}
The importance of our results is appreciated when we extend their application to algebraic structures that are more general than those considered by Malki and Louzari, that is, some noncommutative rings which cannot be expressed as skew polynomial rings $R[x;\sigma]$. In this section, we consider several families of rings that have been studied in the literature which are subfamilies of skew PBW extensions. Of course, the list of examples is not exhaustive.

\begin{example}
Algebras whose generators satisfy quadratic relations such as Clifford algebras, Weyl-Heisenberg algebras, and Sklyanin algebras, play an important role in analysis and mathematical physics. Motivated by these facts, Golovashkin and Maximov \cite{GolovashkinMaximov2005} considered the algebras $Q(a, b, c)$, with two generators $x$ and $y$, defined by the quadratic relations
\begin{equation}\label{GolovashkinMaximov2005(1)}
    yx = ax^2 + bxy + cy^2,
\end{equation}
    
where the coefficients $a$, $b$, and $c$ belong to an arbitrary field $\Bbbk$ of characteristic zero. They presented conditions on these elements under which such an algebra has a PBW basis of the form $\mathcal{B}=\{x^m y^n \mid m, n \in \mathbb{N}\}$. Golovashkin and Maximov given a necessary and sufficient condition for a PBW basis as above to exist when $ac + b \neq 0$ \cite[Section 3]{GolovashkinMaximov2005}. If $ac + b = 0$, they proved that if $b \neq 0, -1$ then $Q(a, b, c)$ has a PBW basis, and if $b = -1$ then the set $\mathcal{B}$ is linearly independent but do not form a PBW basis of $Q(a, b, c)$ \cite[Section 5]{GolovashkinMaximov2005}.
    
We can prove that if $a$, $b$, and $c$ are not all zero, then $Q(a, b, c)$ is neither a skew polynomial ring of $\Bbbk$, $\Bbbk[x]$, nor of $\Bbbk[y]$. If $b \neq 0$ and $c = 0$, one can verify that $Q(a, b, c)$ is an SPBW extension of $\Bbbk[x]$, that is $Q(a, b, c) \cong \sigma(\Bbbk[x])\langle y\rangle$. Theorem~\ref{AbelianNIppTheorem} establishes that the p.p.-property on the right 
(resp., left) guarantees that $Q(a, b, c)$ 
satisfies Property $(a.c.)$. When the extension is bijective, Theorem~\ref{BijectiveSPBWppTheorem} 
shows that this property is preserved for $Q(a, b, c)$. 
Furthermore, Theorem~\ref{pqBaerTheorem} proves that the same holds 
whenever $\Bbbk[x]$ is a $(\Sigma,\Delta)$-compatible right (resp., left) 
p.q.-Baer ring.

\end{example}
    
\begin{example}
Jordan \cite{Jordan2000} introduced a class of iterated skew polynomial rings $R(B, \sigma, c, p)$, known as \emph{ambiskew polynomial rings}, which include different examples of noncommutative algebras. These structures have been investigated by Jordan at various levels of generality in several papers \cite{Jordan1993, Jordan1993b, Jordan1995, Jordan1995b}. We recall the treatment of ambiskew polynomial rings in the framework appropriate to down-up algebras with $\beta \neq 0$.

Let $B$ be a commutative $\Bbbk$-algebra, $\sigma$ be a $\Bbbk$-automorphism of $B$, and elements $c \in B$ and $p \in \Bbbk^*$. If $S$ is the skew polynomial ring $B[x; \sigma^{-1}]$, and extend $\sigma$ to $S$ by setting $\sigma(x) = p x$, then there is a $\sigma$-derivation $\delta$ of $S$ such that $\delta(B) = 0$ and $\delta(x) = c$ \cite[p.~41]{Cohn1985}. The \emph{ambiskew polynomial ring} $R = R(B, \sigma, c, p)$ is the ring $S[y; \sigma, \delta]$, which satisfies the following relations:
\begin{equation}
    yx - p xy = c, \quad \text{and, for all } b \in B, \quad x b = \sigma^{-1}(b) x \quad \text{and} \quad y b = \sigma(b) y.
\end{equation}

Equivalently, $R$ can be presented as $R = B[y; \sigma][x; \sigma^{-1}, \delta']$, where $\sigma(y) = p^{-1} y$, $\delta'(B) = 0$, and $\delta'(y) = -p^{-1} c$, yielding the relation $xy - p^{-1} yx = -p^{-1} c$. If we interpret the relation $x b = \sigma^{-1}(b) x$ as $b x = x \sigma(b)$, we observe that the definition involves twists from both sides using $\sigma$; this motivates the name of these objects. It is well-known that every generalized Weyl algebra is isomorphic to a factor of an ambiskew polynomial ring, and the ring $R(B, \sigma, c, p)$ is isomorphic to the generalized Weyl algebra $B[w](\sigma, w)$, where $\sigma$ is extended to $B[w]$ by setting $\sigma(w) = p w + \sigma(c)$. It is not difficult to show that ambiskew polynomial rings are skew PBW extensions over $B$, that is $R(B, \sigma, c, p) \cong \sigma(B)\langle y, x \rangle$. If $R$ is Baer and $(\Sigma,\Delta)$-compatible satisfying condition (SA1), 
Theorem~\ref{BaerSA1Theorem} ensures that $R(B, \sigma, c, p)$ has Property (a.c.) on the right. Moreover, Theorem~\ref{AbelianNIppTheorem} shows that for an Abelian and NI ring, 
the right (resp., left) p.p.-property of $R$ passes to $R(B, \sigma, c, p)$ with 
respect to Property (a.c.). When the extension is bijective and $R$ is 
$\Sigma$-rigid, Abelian and NI, Theorem~\ref{BijectiveSPBWppTheorem} 
proves that Property $(a.c.)$ still holds on the right (resp., left). 
In addition, Theorem~\ref{pqBaerTheorem} establishes that if the extension is bijective 
and $R$ is a $(\Sigma,\Delta)$-compatible right (resp., left) p.q.-Baer ring, 
then Property $(a.c.)$ on the right (resp., left).
\end{example}

\begin{example}
{\em Skew bi-quadratic algebras} were introduced by Bavula \cite{Bavula2023} as a framework for classifying bi-quadratic algebras on three generators having a PBW basis.

If $L_n(R)$ denotes the set lower triangular matrices with elements of $R$, $C(R)$ is the center of $R$, $\sigma = (\sigma_1, \dotsc, \sigma_n)$ is an $n$-tuple of commuting endomorphisms of $R$, $\delta = (\delta_1, \dotsc, \delta_n)$ is an $n$-tuple of $\sigma$-derivations of $R$ (i.e., $\delta_i$ is a $\sigma_i$-derivation of $R$ for all $ i \le n$), $Q = (q_{ij}) \in L_n(Z(R))$, $\mathbb{A}:= (a_{ij, k}) \in L_n(R)$ and $\mathbb{B}:= (b_{ij}) \in L_n(R)$ where $1 \leq j < i \leq n$ and $k \le n$, then {\em skew bi-quadratic algebra} ({\em SBQA}), which is denoted by $R[x_1,\dotsc, x_n;\sigma, \delta, Q, \mathbb{A}, \mathbb{B}]$ is the ring generated by $R$ and the elements $x_1, \dotsc, x_n$ subject to the relations:
\begin{align}
    x_i r &= \sigma_i(r)x_i + \delta_i(r),\quad \text{for all}\ 1 \le i \le n, \text{ and } r \in R, \label{Bavula2023(1)} \\
    x_i x_j - q_{ij} x_j x_i &= \sum_{k=1}^{n} a_{ij, k} x_k + b_{ij},\quad \text{for all } j < i.\label{Bavula2023(2)}
\end{align}

If $\sigma_i$ is the identity homomorphism of $R$ and $\delta_i$ is the zero derivation of $R$, for all $1 \le i \le n$, then $R[x_1, \dotsc, x_n; Q, \mathbb{A}, \mathbb{B}]$ is called a {\em bi-quadratic algebra} ({\em BQA}). A skew bi-quadratic algebra has a {\em PBW basis} if $A = \bigoplus_{\alpha \in \mathbb{N}^{n}} R x^{\alpha}$ where $x^{\alpha} = x_1^{\alpha_1} \dotsb x_n^{\alpha_n}$.
\end{example}

The definition of SBQA makes it clear that $R[x_1, \dotsc, x_n;\sigma, \delta, Q, \mathbb{A}, \mathbb{B}] \cong \sigma(R)\langle x_1,\dotsc, x_n\rangle$. Moreover, SPBW extensions are more general than skew bi-quadratic algebras, since they do not require the $\sigma$'s to commute (cf. \cite[Definition 2.1]{AcostaLezamaReyes2015}) nor impose any membership conditions on $q_{ij}$ and $b_{ij}$. In this way, if $R$ is Abelian and $(\Sigma,\Delta)$-compatible, 
Theorem~\ref{AbelianNIppTheorem} shows that if $R$ is a right (resp., left) p.p.-ring, 
then the skew PBW extension $R[x_1, \dotsc, x_n;\sigma, \delta, Q, \mathbb{A}, \mathbb{B}]$ has Property $(a.c.)$. 
In the case when the extension is bijective and $R$ is $\Sigma$-rigid, Abelian and NI, 
Theorem~\ref{BijectiveSPBWppTheorem} confirms that Property $(a.c.)$ still holds on the right (resp., left). 
Also, Theorem~\ref{pqBaerTheorem} states that if $R$ is a $(\Sigma,\Delta)$-compatible 
right (resp., left) p.q.-Baer ring, then $R[x_1, \dotsc, x_n;\sigma, \delta, Q, \mathbb{A}, \mathbb{B}]$ has 
Property $(a.c.)$.

\section{Declarations}

The three authors were supported by Dirección Ciencias Básicas, Universidad ECCI- sede Bogotá. All authors declare that they have no conflicts of interest. 


\end{document}